\newtheorem{theorem}{Theorem}[section]
\newtheorem{lemma}[theorem]{Lemma}
\newtheorem{proposition}[theorem]{Proposition}
\theoremstyle{definition}
\newtheorem{definition}[theorem]{Definition}
\newtheorem{example}[theorem]{Example}
\theoremstyle{remark}
\newtheorem{remark}[theorem]{Remark}
 \numberwithin{equation}{section}
\begin{document}

\title{AN INDEPENDENCE SYSTEM AS KNOT INVARIANT}

\author{Usman Ali}
\address{Center for Advanced Studies in Pure and Applied
Mathematics, Bahauddin Zakariya University, Multan, Pakistan.}

\email{uali@bzu.edu.pk}

\author{Iffat Fida Hussain}
\address{Center for Advanced Studies in Pure and Applied
Mathematics, Bahauddin Zakariya University, Multan, Pakistan.}
\email{tariq75786@gmail.com}

\subjclass[2000]{Primary 57M25, 57M27; Secondary 05B35}

\date{....., .... and, in revised form, ....., .....}


\keywords{Unknotting number; independence system; $I$-chromatic
number; exchange property; matroid.}

\begin{abstract}
In this article, we define an independence system for a classical
knot diagram and prove that the independence system is a knot
invariant for alternating knots. We also discuss the exchange property for minimal
unknotting sets. Finally, we show that there are knot diagrams where the independence system is a
matroid and there are knot diagrams where it is not.
\end{abstract}

\maketitle

%
.

\section{Introduction}

When we draw knot diagrams, we can define what is called a U-independence system for it.  The U-independence system can be used to help find new invariants
for alternating knots. A more motivating factor for the U-independence system is its usefulness in examining the relationships between knots and combinatorial
 objects like matroids. When a U-independence system for a knot diagram is a matroid, we can say that every maximal U-independent set has the same cardinality.
  As a result, every minimal unknotting set has the same minimal cardinality. In other words, we need only find a minimal unknotting set in order to determine
  the unknotting number of a knot diagram. This makes the algorithmic methods of finding the unknotting number of a knot much simpler and quicker. This paper
  delves further into the definition of the U-independent set and U-independence system in Section 2. In Section 3, we provide more basic information and
  examples of independence systems and matroids as well as discuss the exchange property for minimal unknotting sets. Next, in Section 4,
  we discuss the properties of a U-independence system and provide a proof of an existence of
isomorphisms between two U-independence systems of reduced
alternating diagrams of a knot. Section 4 will also highlight how
invariants of the U-independence system can be used as invariants of
knots. Finally, we conclude this paper with a proof of the various
relationships between a U-independence system of a knot diagram and
matroids in different families. We further extend our research to
other open areas of research, including bridge numbers and algebraic
unknotting numbers.
\section{Definitions and Examples of Basic Notations}

To first define what a U-independent set is, we begin with an understanding of unknotting numbers and unknotting sets.

The \textit{unknotting number} $u(D)$ of a knot diagram $D$ is the minimum number of switches required to untangle that particular knot diagram. In contrast, the
\textit{unknotting number} $u(K)$ of a knot $K$ is the minimum number of crossings required to switch to the unknot that ranges over all possible diagrams of
knot K.

An \textit{unknotting set} for a knot diagram is the set of all
switches that transforms the diagram into the unknot. We define the
\textit{minimal unknotting set} as the set of switches that have no
proper unknotting subsets. In other words, the minimal unknotting
set contains the minimum number of switches needed to transform the
particular diagram into an unknot.

\begin{definition}\label{exch}

Minimal unknotting sets for a knot diagram have the exchange
property in whenever {S} and {R} are two minimal unknotting sets and
$r\in R$ then there exists $s\in S$ so that $S -  \{s\} \cup \{r\}$
is a minimal unknotting set.
\end{definition}
In simpler terms, the exchange property is said to be true if we can
remove any one element from a minimal unknotting set S and replace
it with another element from some other minimal unknotting set R so
that the resulting set is also a minimal unknotting set. The
exchange property raises some certain advantages: if the exchange
property holds for two sets, then every minimal unknotting set for
that diagram has the same size. This makes algorithmic methods to
find the minimum size of an unknotting set easier. For example, the
exchange property for the diagram of the figure eight knot holds
because all the minimal unknotting sets are of cardinality one (Fig.
1(a)). However, to show that the exchange property does not hold, we
can show that there exists two minimal unknotting sets with
different cardinalities. For example, the three twist knot (Fig.
1(b)) has the following minimal unknotting sets: $$ \{v_{4}\},\
\{v_{5}\},\{v_{1},v_{2}\},\{v_{1},v_{3}\} \textrm{ and }
\{v_{2},v_{3}\}.$$ Since the minimal unknotting sets do not have the
same cardinality, the exchange property does not hold. There are
cases, however, in which the exchange property still does not hold
for minimal unknotting sets with the same cardinality (see
subsection 3.3 for further explanation).

A property defined in a finite set, which is also a property of
its subsets, is called a \textit{hereditary property}, see \cite{12}.
An \textit{independence family} $I $ on a finite ground set $E$ is a
non-empty collection of sets $X\subset E$, satisfying the hereditary
property. An \textit{independence system} $(E,I)$ for the set $E$ consists of an independence family $I$ with subsets of $E$.
The maximal independent sets are called \textit{bases} of $(E,I)$.
An independence system is called a \textit{matroid} if all of its
bases have the exchange property (see \cite[Definition 2.1]{12}).\\

With these terms in mind, we can formulate our key definition.

\begin{definition}\label{key}
A U-independent set is a set $W$ of crossings in a given knot
diagram such that $W\setminus S$ is not an unknotting set for every
non-empty $S\subseteq W$. In other words, a U-independent set is the
set of crossings that does not contain an unknotting set.
\end{definition}

The definition of a U-independent set leads to the U-independence system $%
(E,I)$ for a knot diagram $D$, where $E$ is the set of all crossings
of $D$, and $I$ is the independence family consisting of the
U-independent sets for $D$. In other words, the independence system
$(E,I)$ is the set of all subsets of $E$ that do not contain a
proper unknotting set.

We say that a U-independent set is \textit{maximal} if it is not
contained in any other U-independent set. By the definition of
U-independence, every minimal unknotting set is a maximal
U-independent set. While this statement is true, its converse (every
maximal U-independent set is a minimal unknotting set) may not
always be true.

\begin{definition}\label{reduced}
A \textit{reduced knot diagram} is a knot diagram where no crossing can be removed just by twisting it.
\end{definition}
\begin{definition}\label{minimald}
A \textit{minimal knot diagram} is a knot diagram which needs the
minimum crossings to draw the knot.
\end{definition}
A minimal knot diagram in a Rolfsen table is denoted by $m_t$ where
$m$ is the number of crossings in the diagram and $t$ is the number
of different knot diagrams with $m$ crossings, see \cite{10}. An
\textit{alternating knot} is a knot which has a knot diagram in
which crossings alternate under and over each other. In case of
alternating knots, the notions of minimal and reduced diagrams
coincide. Consequently, all the reduced alternating diagrams of a
knot have the same number of crossings, see \cite{K6}. The
U-independence system for a reduced alternating diagram of a knot is
declared as a knot invariant by the following theorem (see Section
4.2 for its proof):

\begin{theorem}\label{iso}
Let ($E_{1},I_{1})$ and ($E_{2},I_{2})$ be the  U-independence
systems of reduced alternating diagrams $D_1$ and $D_2$ of a knot,
$K$ respectively. There exists an isomorphism $\varphi$ between
($E_{1},I_{1})$ and ($E_{2},I_{2})$.
\end{theorem}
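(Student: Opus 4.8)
The plan is to reduce the problem to a single flype and then show that the bijection on crossings induced by a flype preserves unknotting sets, and therefore U-independent sets. The principal external ingredient is the flyping theorem of Menasco and Thistlethwaite: any two reduced alternating diagrams of the same knot $K$ are related by a finite sequence of flypes together with a planar isotopy. Since isomorphism of independence systems is transitive, it suffices to produce an isomorphism $\varphi$ whenever $D_2$ is obtained from $D_1$ by a single flype; composing the isomorphisms along the sequence then yields the desired $\varphi$ between $(E_1,I_1)$ and $(E_2,I_2)$.

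So assume $D_2$ arises from $D_1$ by flyping a tangle $T$ past a single crossing $c$. First I would set up the canonical bijection $\varphi\colon E_1\to E_2$. A flype rotates the tangle $T$ by $180^\circ$ and slides the crossing $c$ to the opposite side; it neither creates nor destroys crossings. Hence every crossing of $D_1$ has a well-defined image in $D_2$: the crossings lying outside $T$ are fixed, the crossing $c$ is carried to its relocated copy, and each crossing inside $T$ is carried to its rotated copy. This assignment is clearly a bijection, consistent with the fact already noted (\cite{K6}) that $D_1$ and $D_2$ have the same number of crossings.

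The heart of the argument is to prove that $\varphi$ preserves unknotting sets, i.e.\ that for any $W\subseteq E_1$, switching the crossings in $W$ unknots $D_1$ if and only if switching the crossings in $\varphi(W)$ unknots $D_2$. The key observation is that a flype commutes with crossing switches: if one first switches the crossings of $W$ in $D_1$ and then performs the flype, the result is planar-isotopic to the diagram obtained by first performing the flype and then switching the crossings of $\varphi(W)$ in $D_2$. This holds because a crossing switch changes only the over/under information at a point, and this local datum is transported unchanged by the $180^\circ$ rotation of $T$ and by the relocation of $c$, while the flype move itself is insensitive to it. Consequently the diagram $D_1$ with $W$ switched and the diagram $D_2$ with $\varphi(W)$ switched represent the same knot, so one is the unknot precisely when the other is.

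I expect the commutation claim to be the main obstacle, and it must be verified by cases according to whether a crossing of $W$ lies outside $T$, equals $c$, or lies inside $T$, checking in each case that switching and flyping can be interchanged. Once this is established, the conclusion is immediate: by Definition \ref{key} a set is U-independent exactly when it contains no unknotting set, so a bijection that carries unknotting sets to unknotting sets automatically preserves the property of containing one. Thus $W\in I_1$ if and only if $\varphi(W)\in I_2$, and $\varphi$ is the required isomorphism of U-independence systems.
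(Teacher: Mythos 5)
Your proposal is correct and follows essentially the same route as the paper: invoke the Menasco--Thistlethwaite flyping theorem, reduce to a single flype, take the induced bijection on crossings, and argue that a flype preserves unknotting sets (hence U-independent sets), composing along the sequence of flypes. Your explicit commutation argument (switch-then-flype equals flype-then-switch) is a more careful justification of the step the paper simply asserts, but it is the same underlying idea.
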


Knots given by $c_{1}, c_{2},\ldots, c_{j}$ in Conway notation are
denoted by $(c_{1}, c_{2},\ldots c_{j})$, see \cite{1,4}. The
following proposition describes whether the U-independence systems
of knot diagrams $(2n+1, 1, 2n)$, $(2n+1)$, and $(2n,2)$ are
matroids or not.

\begin{proposition}\label{fig15}
The U-independence system of each knot diagram:\\
 a) is not a matroid in the family $(2n+1,1,2n)$ for $n\geq \ 2$ (Fig.
$7$);\\
b) is a matroid in the family $(2n+1)$ for
$n\geq 1$ (Fig. $9$); and\\
c) is not a matroid in the family $(2n,2)$ for $n\geq 2$ (Fig.
$10$).
\end{proposition}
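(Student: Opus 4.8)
The plan is to exploit the standard fact that in any matroid all bases are equicardinal (an immediate consequence of the exchange property for bases). Since the paper has already observed that every minimal unknotting set is a maximal $U$-independent set, i.e.\ a basis of $(E,I)$, it suffices for parts (a) and (c) to exhibit two minimal unknotting sets of \emph{different} cardinalities: bases of different sizes violate equicardinality, so the exchange property fails and the system is not a matroid. For part (b) I must instead describe \emph{all} $U$-independent sets and recognise the resulting system as a known matroid.

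The computational engine throughout is the rational-tangle model. Each Conway block of $m$ like-signed crossings is a twist region; switching $k$ of its crossings reverses their sign, and after the resulting Reidemeister~II cancellations the block behaves as a region with parameter $m-2k$. Writing the reduced Conway data as a continued fraction $[\,\cdot\,]$ and reducing its value to lowest terms $p/q$, the closed diagram is the unknot exactly when $p=\pm1$ (including the degenerate integer and $\infty$ tangles). Thus for a prescribed set of switches I can decide unknottedness by a single arithmetic test on the numerator, and I can test \emph{minimality} of a candidate unknotting set by checking that the numerator never equals $\pm1$ for any of its proper subsets.

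For part (b) the diagram $(2n+1)$ has one twist region; switching $k$ crossings yields the integer tangle $2n+1-2k$, an unknot iff $\abs{2n+1-2k}\le 1$, i.e.\ $k\in\{n,n+1\}$. Hence every $n$-element subset is a minimal unknotting set and no set of size $\le n-1$ is one, so the $U$-independent sets are exactly the subsets of size $\le n$. This is the uniform matroid $U_{n,2n+1}$, whose bases are all the $n$-subsets; the exchange property is immediate since any $n$-subset is a basis, proving (b). For part (c) the diagram $(2n,2)$ has a $2n$-twist region $a_1,\dots,a_{2n}$ and a $2$-crossing clasp $b_1,b_2$. Switching either clasp crossing sends the clasp parameter to $0$ and trivialises the diagram, so $\{b_1\}$ and $\{b_2\}$ are minimal unknotting sets of size $1$; switching any $n$ of the $a_i$ sends the twist parameter to $0$, giving a minimal unknotting set of size $n$. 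Since $n\ne 1$ for $n\ge2$, two bases of different sizes exist and the system is not a matroid. For part (a) the diagram $(2n+1,1,2n)$ has regions of sizes $2n+1$, $1$, and $2n$; writing $A,B,C$ for the reduced parameters, the numerator is $ABC+A+C$ and the denominator $BC+1$. Taking $n$ crossings from the first region together with the single middle crossing gives $A=1$, $B=-1$, $C=2n$, hence numerator $1$ and a minimal unknotting set of size $n+1$; taking $n$ crossings from the first region and $n$ from the third gives $A=1$, $B=1$, $C=0$, again numerator $1$, a minimal unknotting set of size $2n$. As $2n\ne n+1$ for $n\ge2$, the system once more has bases of two sizes and is not a matroid.

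The main obstacle is establishing \emph{minimality} of the exhibited sets, rather than merely that they unknot: I must verify that no proper subset is itself an unknotting set. For (c) this reduces to checking that deleting elements raises the relevant parameter away from $0$, so the numerator leaves $\{\pm1\}$. For (a) the subtle point is the coupling of the three regions: one checks that with the middle crossing present ($B=-1$) or absent ($B=1$) the numerator $ABC+A+C$ equals $\pm1$ only at the full exhibited sets and at no proper subset, a short finite case analysis in $k_1,k_2,k_3$. For part (b) the delicate claim is the complete description of $I$: one needs that \emph{every} $n$-subset unknots (forcing all of them to be bases) and that nothing larger is independent, which is what pins the system down as the uniform matroid rather than merely guaranteeing that the minimal unknotting sets happen to be equicardinal.
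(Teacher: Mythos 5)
Your proposal is correct and follows the same top-level logic as the paper: for (a) and (c) you exhibit two minimal unknotting sets of different cardinalities, hence two bases of different sizes, which kills the exchange property; for (b) you identify the independent sets as exactly the subsets of size at most $n$, i.e.\ the uniform matroid $U_{n,2n+1}$. Where you genuinely diverge is in the certification machinery. The paper reads its unknotting sets directly off the labelled figures --- $\{w,u_3',\dots,u_{2n-1}'\}$ and $\{v_2,v_4,\dots,v_{2n},w\}$ of sizes $n$ and $n+1$ for (a), $\{w\}$ and $\{v_1,\dots,v_n\}$ for (c) --- and proves $u(D)=n$ for the family $(2n+1)$ by an induction on twist crossings (Lemma 4 of Section 5), whereas you run everything through the rational-tangle continued-fraction calculus: switching $k$ crossings in a twist region of size $m$ yields parameter $m-2k$, and unknottedness is the single arithmetic test ``numerator $=\pm1$'' from Schubert's classification of two-bridge knots. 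This buys you a uniform, figure-independent verification of both unknottedness and minimality (your finite case analysis in $k_1,k_2,k_3$ replaces the paper's appeal to inspection), and it subsumes the paper's Lemma on $u((2n+1))=n$ as the one-line computation $\abs{2n+1-2k}\le 1\iff k\in\{n,n+1\}$; the cost is reliance on the two-bridge classification and some care with the degenerate $0$- and $\infty$-tangles. Note also that your two sets in part (a) have sizes $n+1$ and $2n$ rather than the paper's $n$ and $n+1$; since $2n\neq n+1$ for $n\ge 2$, this is equally conclusive.
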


%
%
%
One of the reasons we define the U-independence system for a knot
diagram is so we can examine the interplay between knots and
combinatorial objects like a matroid. Another motivation is to find
new invariants for alternating knots. There is a noteworthy
advantage when a U-independence system for a knot diagram is a
matroid. If it is a matroid, then every maximal U-independent set
has the same cardinality. As a result, every minimal unknotting set
has the same minimal cardinality. In other words, one need only to
find a minimal unknotting set in order to determine the unknotting
number of the knot diagram. This makes it easier to use algorithmic
methods to find the unknotting number of a knot.


\section{Definitions and Examples of Basic Notions}
\subsection{Independence System}
An independence system $(E,I)$  is also called \textit{abstract
simplicial complex} and a \textit{hereditary system}, see \cite{5,12}.
The set $ X $ is called an \textit{independent set} if $X\subset I$ and
called a \textit{dependent set} otherwise. The empty set $\phi $ is
independent and the set $E$ is dependent by definition. Based on the
definition of independence in different contexts, there are a
variety of independence systems. For example, in linear algebra, the
independence system is the usual linear independence, see \cite{13}.
Similarly, for a simple undirected graph the property is
edge-independent, i.e, a set of edges is independent if its induced
graph is acyclic, see \cite{12}. The independent sets of each
independence system $(E,I)$ form different partitions of the ground
set $E$. The partition of $E$ into the smallest number of
independent sets is called a \textit{minimum partition.} The number
of independent sets in a minimum partition of $E$ is called the
$I$\textit{-chromatic number} $\chi (E,I)$ of $(E,I)$ (see \cite{14}
for details).
\subsection{Matroid}
A matroid is a generalization of the linear independence in linear
algebra. The formal definition of a matroid is given here which will be used later in our discussion.
\begin{definition} [\cite{12}]\label{matroid}
 The independence system $(E,I)$ consisting of a family $I$
with subsets of a finite set $E$ is a \textit{matroid }if it
satisfies the exchange property, as previously defined.
To reiterate, the exchange property states that for any two maximal independent sets $M_1$ and
$M_2$ and for every $x\in M_{1}$, there exists a $y\in M_{2}$ such
that $(M_{1}\setminus \{x\})\cup \{y\}$ is also a maximal
independent set.
\end{definition}

The independence systems described in Subsection $2.1$ form
matroids. The first independence system of linearly independent sets
in a vector space is known as the \textit{matric matroid}, and the second
whose independent sets are acyclic sets of edges for a simple
undirected graph is known as the \textit{graphic matroid}, see \cite{9}.
A set of vertices in a simple graph is called a vertex independent set
if no two vertices in the set are adjacent to each other. In general, the vertex
independence system of a simple graph is not a matroid.

\subsection{The Exchange Property for Minimal Unknotting Sets}

If the exchange property holds for all maximal independent sets
(bases) of an independence system, then those bases have the same
cardinality, see \cite{12}. Since minimal unknotting sets are
maximal U-independent sets for a knot diagram, the following remark
is worth mentioning.

\begin{remark}\label{rem} If the exchange property for minimal unknotting sets of a knot diagram holds, then
 all minimal unknotting sets have the same size and
the unknotting number of the diagram can be determined by just
finding a minimal unknotting set.
\end{remark}
\begin{figure}
\begin{center}
\includegraphics [width=6cm]{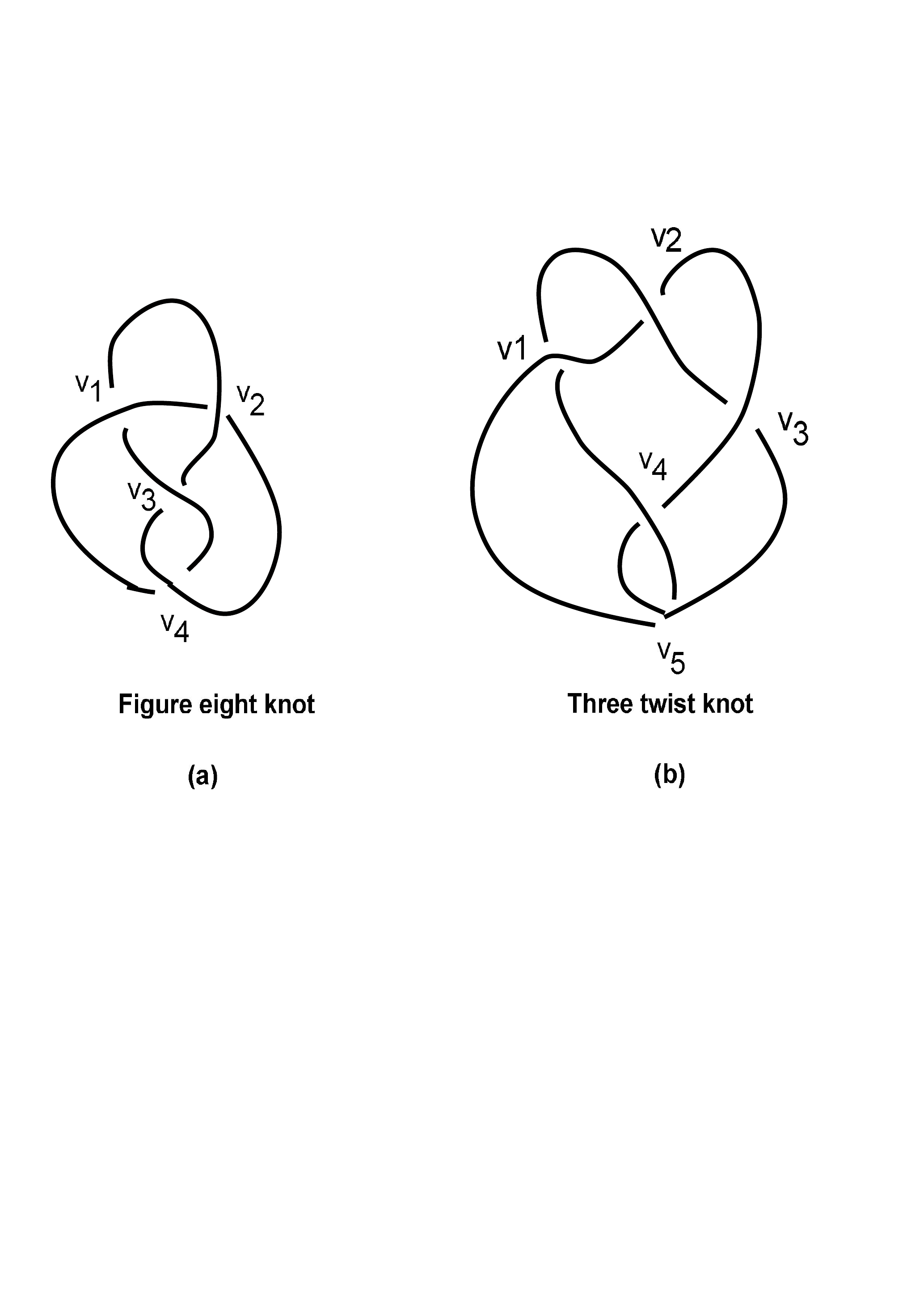}
\caption{} \label{figure1}
\end{center}
\end{figure}
The exchange property for the diagram of the figure eight knot (Fig.
$1(a)$) holds trivially because all the minimal unknotting sets are
of cardinality one. To show that the exchange property does not
hold, we show that there exist two minimal
unknotting sets of different cardinalities. For example, the three
twist knot (Fig. $1(b)$) has the following minimal unknotting sets:
$$ \{v_{4}\},\ \{v_{5}\},\{v_{1},v_{2}\},\{v_{1},v_{3}\} \textrm{
and } \{v_{2},v_{3}\}.$$

\noindent These minimal unknotting sets do not have the same
cardinality, so the exchange property does not hold.\ However, the
exchange property may still not hold for the minimal unknotting sets
that have the same cardinality. For example, the minimal diagram
$8_{3}$ knot (Fig. $2(a)$) has two minimal unknotting sets
$\{v_{1},v_{2}\}$ and $\{v_{5},v_{6}\}$ of the same cardinality.
All possible sets obtained by exchanging elements of these sets are $\{v_{1},v_{5}\},%
\{v_{1},v_{6}\},\{v_{2},v_{5}\},$ and $ \{v_{2},v_{6}\}$, which are
not unknotting sets. For example, when the crossings $v_{1}$ and
$v_{5}$ are switched (Fig. $2(b)$), the knot $8_{3}$ is not
transformed to the unknot. The set $\{v_{1},v_{6}\}$ is also not an
unknotting set (Fig. $2(c)$). Similarly, $\{v_{2},v_{5}\}$ and $
\{v_{2},v_{6}\}$ are not unknotting sets.

\begin{figure}
\begin{center}
\includegraphics [width=11cm]{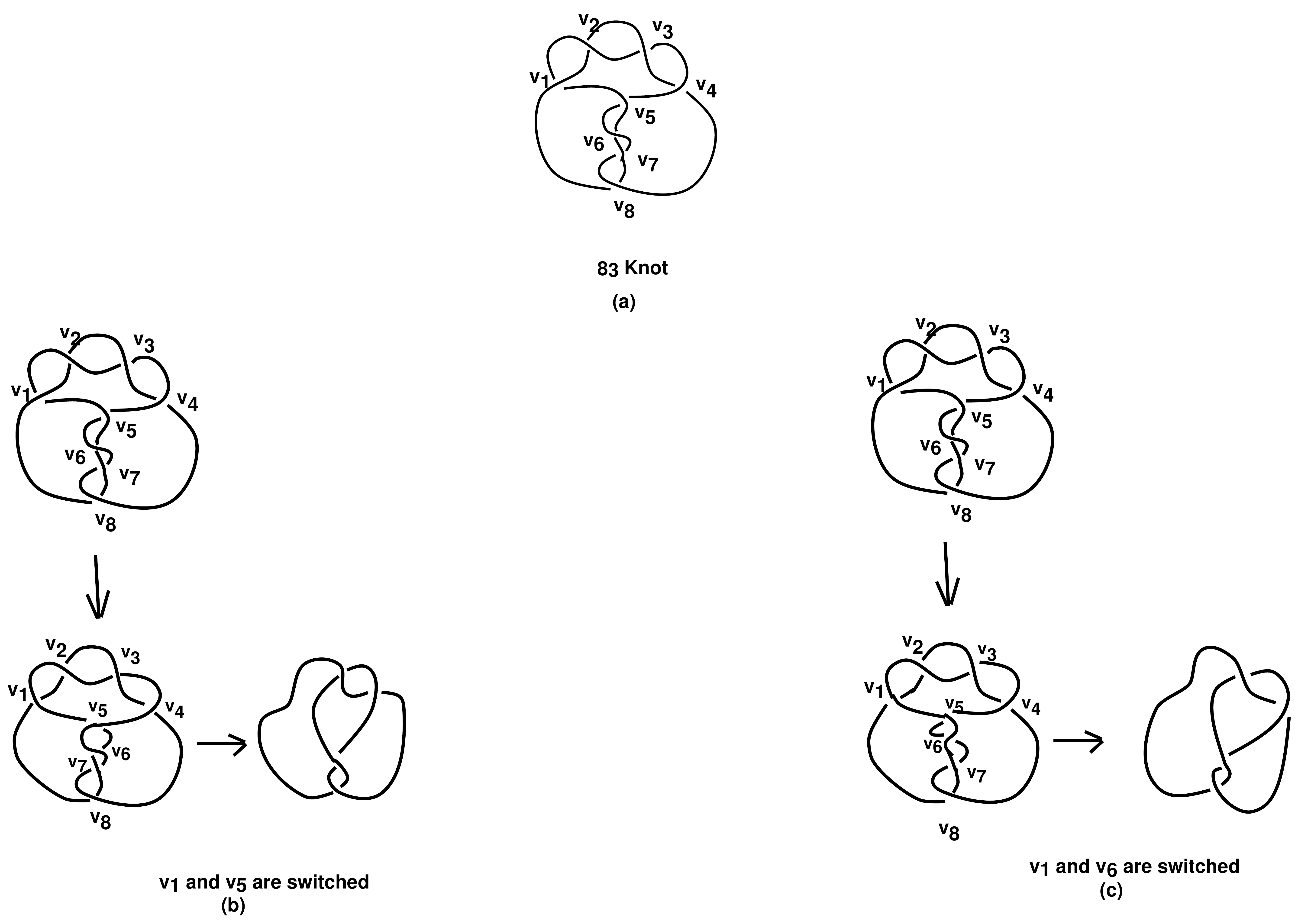}
\caption{} \label{figure2}
\end{center}
\end{figure}
The Table \ref{eqtable} below lists some minimal knot diagrams up to
$8$ crossings that depict their exchange property for minimal
unknotting sets.
\begin{table}[ht]
\caption{}\label{eqtable}
\renewcommand\arraystretch{1.5}
\noindent\[
\begin{array}{|c|c|c|c|}
\hline
Knot & {\small Exchange Prop.\ holds} & Knot & Exchange Prop.\ holds \\
\hline
{3}_1 & yes & {7}_4 & no \\
\hline 4_{1} & yes &
7_{5} & no \\
\hline
 5_{1}& yes & 7_{6} & no \\ \hline
5_{2} & no & 7_{7} & no \\ \hline 6_{1} & no & 8_{1} & no \\
\hline 6_{2} & no & 8_{2} & no \\ \hline 6_{3} & no & 8_{3} & no
\\ \hline 7_{1} & yes & 8_{4} & no \\ \hline 7_{2} & no &
8_{5} & no \\ \hline 7_{3} & no & 8_{6} & no \\ \hline
\end{array}
\]
\end{table}
\FloatBarrier
\section{U-Independence System of a Knot Diagram}

\subsection{Basic Properties}
The idea of converting a minimality in one sense to a maximality in
another sense was first introduced by Boutin (see \cite{3} where
\textit{det-independent} and \textit{res-independent} sets were
defined for determining and resolving sets respectively in simple
graphs.) In this paper, the definition of a U-independent set (see
Definition \ref{key}) is slightly different than the one given in
\cite{3}. This definition is modified to suit our purpose.

The knot diagram of $ 7_{3}$ (Fig. $3(a)$) given in
the Rolfsen knot table \cite{10} has the unknotting number two.
Let $E=\{v_{1},v_{2},v_{3},v_{4},v_{5},v_{6},v_{7}\}$
be the set of all crossings in the minimal diagram (Fig. $3(a)$). Some of the unknotting sets are $W_{1}=\{v_{1},v_{2}%
\},W_{2}=\{v_{1},v_{3}\},W_{3}=\{v_{1},v_{4}\},W_{4}=\{v_{2},v_{3}\},W_{5}=
\{v_{2},v_{4}\}$ and $W_{6}=$ $\{v_{3},v_{4}\}$. All these
unknotting sets are U-independent. For example, $W_{1}\setminus
\{v_{1}\}$ and $W_{1}\setminus \{v_{2}\}$ are not unknotting sets.
There may be other U-independent sets not necessarily unknotting
sets, e.g, $\{v_{1},v_{5}\}$ is not an unknotting
set but U-independent because $\{v_{1},v_{5}\}\setminus \{v_{1}\}$ and $%
\{v_{1},v_{5}\}\setminus \{v_{5}\}$ are not unknotting sets.

A \textit{minimum unknotting set} has the smallest cardinality among
all the minimal unknotting sets of a knot diagram. This smallest
cardinality is actually $u(D)$ of the knot diagram. Every minimum
unknotting set is minimal, but the converse may not always be true. For example, for the knot diagram $7_{3}$ (Fig. $3(a)$), all minimal unknotting sets are:\\
$\{v_{1},v_{2}\},\{v_{1},v_{3}\},\{v_{1},v_{4}\},\{v_{2},v_{3}\},%
\{v_{2},v_{4}\}\ \{v_{3},v_{4}\},\{v_{1},v_{5},v_{6}\},
\{v_{1},v_{5},v_{7}\},$ \\$\{v_{1},v_{6},v_{7}\},\{v_{2},v_{5},v_{6}\},%
\{v_{2},v_{5},v_{7}\},\{v_{2},v_{6},v_{7}\},\{v_{3},v_{5},v_{6}\},%
\{v_{3},v_{5},v_{7}$
$\},\{v_{3},v_{6},v_{7}\},\\ \{v_{4},v_{5},v_{6}\},\{v_{4},v_{5},v_{7}\}$ and $%
\{v_{4},v_{6},v_{7}\}$.

Only $%
W_{1},W_{2},W_{3},W_{4},W_{5}$ and $W_{6}$ are minimum unknotting
sets.
We know that while a knot $K$ has infinite many knot diagrams, it is
not necessarily true that $u(K)$ is always obtained from a minimal
diagram of $K$. In addition, there might be a knot diagram of $K$,
not necessarily a minimal one that has the same unknotting number as
$u(K)$. For the many knots listed in the Rolfsen Table of knots in
\cite{10}, $u(K)$ is the
same for the minimal and other diagrams of $K$. However, for the knot $10_{8}$ ($%
(5,1,4)$ in Conway notation \cite{4}), the minimal diagram (Fig.
$3(b)$) is unknotted by switching at least three crossings with the
minimum unknotting set $\{v_{2},v_{4},v_{6}\}$.
 There is another diagram (Fig. $3(c)$) of $10_{8}$ which
turns to the
unknot by switching only $2$ crossings with a minimum unknotting set $%
\{v_{6},v_{9}^{\prime }\}$.
\begin{figure}
\includegraphics [width=11cm]{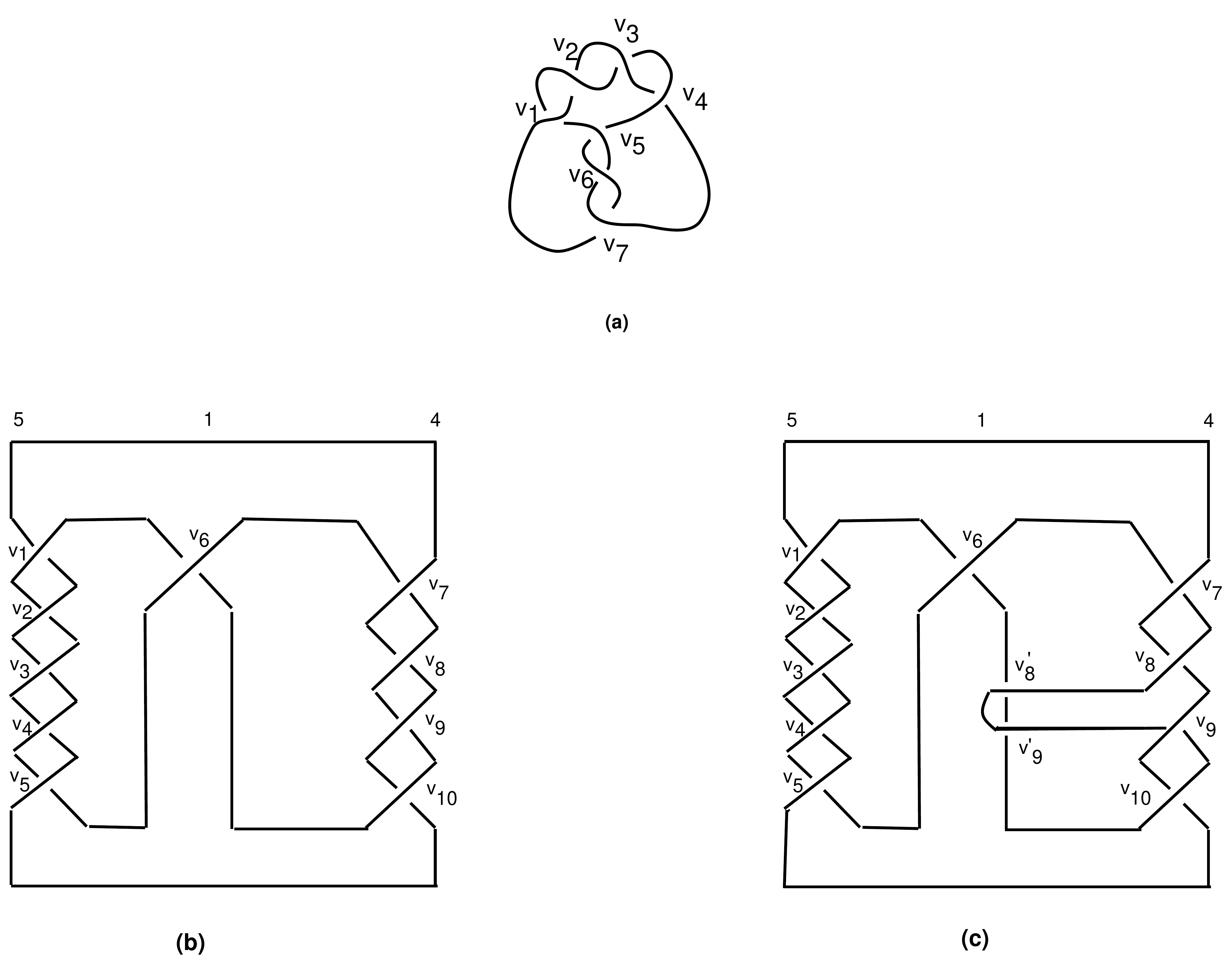}
\caption{} \label{figure3}
\end{figure}
\FloatBarrier The unknotting number of this diagram is actually the
unknotting number of the knot $10_{8}$ (see \cite{2,8}). An
unknotting number, called $u_{min}(K)$, can be defined for each
minimal diagram of a knot $K$, see \cite{11}. Note that for a knot
$K$, the following inequality holds:
$$u(K)\leq u_{min}(K).$$

\subsection{U-independence System as knot invariant}
\begin{definition} [\cite{6a}]
Let ($E_{1},I_{1}$) and ($E_{2},I_{2}$) be two independence systems.
Let there exist a bijection $\varphi :E_{1}\rightarrow E_{2}$ such
that $\varphi (X)\in I_{2}$ if and only if $X\in I_{1}$. Then,
($E_{1},I_{1}$) and ($E_{2},I_{2}$) are said to be isomorphic.
\end{definition}
In order to prove that the U-independence system is a knot invariant
for an alternating knot, the following well-known conjecture of Tait
(proved in \cite{TC} by Menasco and Thistlethwaite) is needed.
\begin{theorem}([The Tait flyping conjecture])\label{tait}
Given reduced alternating diagrams $D_1$, $D_2$ of a knot (or link), it is then possible to transform $D_1$ to $D_2$ by a sequence of
\emph{flypes} (Fig. $4$).
\end{theorem}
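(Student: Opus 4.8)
The plan is to follow the geometric strategy of Menasco and Thistlethwaite, since the statement is precisely their classification theorem for alternating links. The entire argument lives in the complement $S^3 \setminus L$ and exploits the fact that a reduced alternating projection imposes very strong restrictions on the incompressible surfaces that can appear in that complement. First I would reduce to the prime case: using the essential-sphere (connected-sum) decomposition, any reduced alternating diagram splits canonically into prime alternating summands, and no flype mixes two summands, so it suffices to prove the statement when $L$ is prime and both $D_1$ and $D_2$ are prime reduced alternating diagrams. This reduction is routine once the surface theory below is available.

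The geometric core is Menasco's theory of \emph{standard position}. I would place each diagram on a projection sphere $\Sigma \subset S^3$ and thicken each crossing into a small ball, so that $L$ lies on $\Sigma$ except where it dives through these crossing balls; then $\Sigma$ cuts $S^3 \setminus N(L)$ into two balls $B_+$ and $B_-$. The key input is that for a \emph{reduced} alternating diagram the two checkerboard spanning surfaces are essential, and, more generally, that every closed essential surface and every essential meridionally-incompressible surface can be isotoped into standard position: it meets $\Sigma$ in a finite family of disjoint circles, meets each crossing ball in standard saddle-disks, and meets each $B_\pm$ in a union of disks whose boundary curves are taut with respect to the regions of the diagram. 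Proving this is the technical heart and, I expect, the main obstacle — it is carried out by innermost-disk and cut-and-paste arguments that control how an incompressible surface can intersect the crossing balls and the checkerboard surfaces, ruling out every configuration except the standard ones.

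With standard position in hand, I would analyze essential \emph{Conway spheres}, i.e.\ two-spheres meeting $L$ in four points, because a flype is exactly the data of a tangle bounded by such a sphere together with a $180^\circ$ twist. The standard-position analysis forces an essential Conway sphere to be \emph{visible}: up to isotopy it lies almost within $\Sigma$, meeting it in only a controlled number of circles, so it literally encircles a sub-tangle of the planar diagram and therefore exhibits a flype. I would then cut both $D_1$ and $D_2$ along a maximal disjoint collection of such visible Conway spheres, decomposing each diagram into the same cyclically arranged system of flype-minimal prime tangles, a so-called flype circuit.

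Finally comes the combinatorial endgame. Within each flype-minimal piece the reduced alternating diagram is \emph{rigid}, that is, determined up to the obvious symmetries by the prime tangle it carries, so the two decompositions of $D_1$ and $D_2$ must agree piece by piece. The only remaining freedom in reassembling the pieces around the flype circuit is a sequence of the permitted $180^\circ$ twists, and these are precisely flypes; hence $D_1$ can be carried to $D_2$ by a finite sequence of flypes, as required. After the standard-position theorem, I expect the rigidity of the flype-minimal pieces together with the bookkeeping of the flype circuit to be the second genuinely delicate point of the argument.
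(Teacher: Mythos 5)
First, a point of calibration: the paper does not prove Theorem \ref{tait} at all. It is imported as a black box, with the proof attributed to Menasco and Thistlethwaite \cite{TC}, and it is then \emph{used} to prove Theorem \ref{iso}. So the only fair comparison is with the Menasco--Thistlethwaite argument itself, and against that benchmark your proposal has a genuine gap. What you have written is a scaffold of their strategy in which the two load-bearing steps --- the standard-position theorem for essential surfaces and the rigidity of the flype-minimal pieces --- are named but not supplied. You acknowledge they are the ``technical heart,'' which is honest, but deferring them means the proposal contains essentially none of the content that makes the theorem true; nothing in your sketch would fail for a hypothetical non-alternating analogue where the conclusion is false, which is the usual sign that the argument has not yet engaged the problem.

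Second, and more concretely, your outline misdescribes the actual proof in one important respect: the Menasco--Thistlethwaite argument is \emph{not} purely geometric. It relies essentially on the polynomial-based Tait conjectures --- that reduced alternating diagrams realize the minimal crossing number and have invariant writhe, proved via the Jones/Kauffman polynomial (cf.\ \cite{K6}) --- to set up and run the induction; without that input the ``combinatorial endgame'' you describe does not close. A fully geometric proof along the lines you sketch was only achieved decades later (by Kindred, via a re-plumbing argument on checkerboard surfaces), and it looks quite different from your rigidity-plus-flype-circuit bookkeeping. Relatedly, your claim that standard position forces every essential Conway sphere to be \emph{visible} in the diagram is false as stated: essential Conway spheres in alternating diagrams may be hidden, and handling the visible/hidden dichotomy is part of the difficulty in \cite{TC}. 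Finally, for the purposes of this paper none of this labor is needed: the theorem is a citation, and the correct ``proof'' here is simply the reference.
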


\textbf{Proof of Theorem \ref{iso}.}
 Let $v_i$ be a crossing in the diagram $D_1$. Apply the flype (Fig. $4$) to $D_1$ to remove the crossing
 $v_i$ and create a new crossing with the same label $v_i$. More precisely, the tangle (the shaded disc in
Fig. $4$) is turned upside-down to map the crossing (one to its
left) to the crossing (one to its right). During the application of
the flype all the unknotting/not unknotting sets of the diagram
$D_1$ are preserved. Consequently, all the U-independent sets are
preserved in the process. By Theorem \ref{tait}, the diagram $D_1$
can be converted to $D_2$, through a sequence of the flypes,
preserving the U-independent sets. As a result, an isomorphism
$\varphi$ between  $(E_{1},I_{1})$ and ($E_{2},I_{2})$ is
established. \qed

\begin{figure}
\includegraphics [width=10cm]{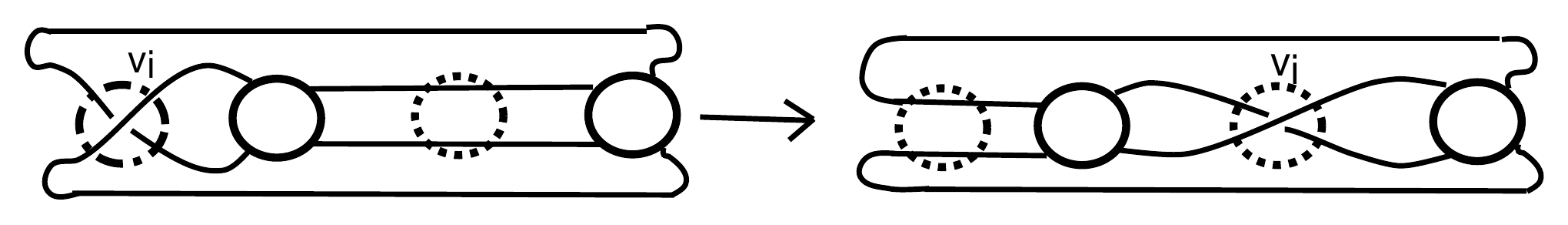}
\caption{flype} \label{figure3}
\end{figure}
 Theorem \ref{iso} further states that the U-independence system (defined for a reduced alternating diagram $D$
of a knot $K$) itself and all its invariants are knot invariants.
The number $u_{min}(K)$ can also be defined as the cardinality of a
U-independent set which is also a minimum unknotting set. The number
is not a complete invariant, i.e., there are non-isotopic knots
having the same $u_{min}(K)$.\ However, other invariants of the
${\Large U}$-independence systems of non-isotopic alternating knots
may distinguish them where $u_{min}(K)$ fails to do so. Here are two
such examples.

\noindent\textbf{The Number of U-independent Sets of a Fixed
Cardinality}
\begin{example}
Consider the reduced diagram (Fig. $5(a)$) of knot $6_{1}$ and the
reduced diagram (Fig. $5(b)$) of knot $6_{2}$. For the knot $6_{1},$ the set of all crossings $E=%
\{v_{1},v_{2},v_{3},v_{4},v_{5},v_{6}\}$ is divided into two disjoint sets $%
A $ and $B$: the set $A=\{v_{1},v_{2},v_{3},v_{4}\}$ and
$B=\{v_{5},v_{6}\}$. In $A$, no single cross switching turns the
knot into the unknot. In contrast, when any crossing in $B$ is
switched, the knot is unknotted. All possible subsets of
$\{v_{1},v_{2},v_{3},v_{4}\}$ of cardinality two are minimal
unknotting sets. Furthermore, every subset of cardinality three,
four, or five contains an unknotting set. Thus, all the
U-independent
sets are $\{v_{1}\},\{v_{2}\},\{v_{3}\},\{v_{4}\},\{v_{5}\},\{v_{6}\},%
\{v_{1},v_{2}\},\\\{v_{1},v_{3}\}, \{v_{1},v_{4}\},$
$\{v_{2},v_{3}\}, \{v_{2},v_{4}\},$ $\{v_{3},v_{4}\}$. There are six
U-independent sets of size $2.$

\begin{figure}
\includegraphics [width=10cm]{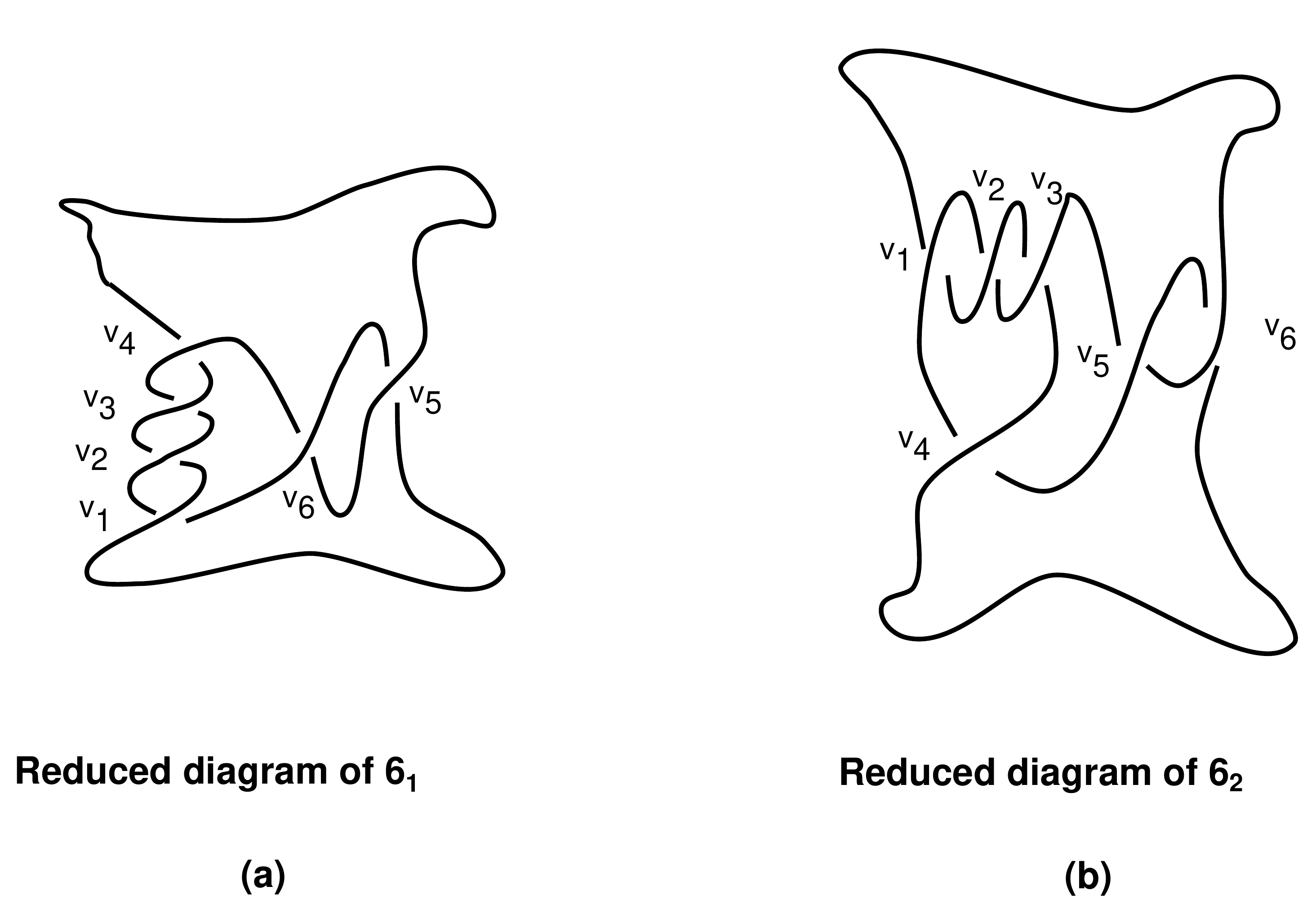}
\caption{Reduced diagram of $6_{1}$ and $6_{2}$} \label{figure3}
\end{figure}
For the knot $6_{2}$, the set
$E=\{v_{1},v_{2},v_{3},v_{4},v_{5},v_{6}\}$ (Fig. $5(b)$) is
divided into three disjoint sets $A$, $B$, and $C$: in the set $%
A=\{v_{1},v_{2},v_{3}\}$, there is no crossing in $A$ which turns
the knot to the unknot; the set $B=\{v_{4}\}$ is an unknotting set; and the set $%
C=\{v_{5},v_{6}\}$ contains no unknotting set. When any two crossings from $%
A\cup B$ are switched, the knot is unknotted. However, there is no
unknotting set of cardinality 2 in $B\cup C$. Furthermore, every
subset of cardinality $3$, $4$, or $5$ contains an unknotting set.
Thus, the U-independent sets are $\{v_{1}\},$
$\{v_{2}\},\{v_{3}\},\{v_{4}\},\{v_{5}\},%
\{v_{6}\},\{v_{1},v_{2}\},\{v_{1},v_{3}\},\{v_{1},v_{5}\},\{v_{1},v_{6}\},%
\{v_{2},v_{3}\},\{v_{2},v_{5}\},\\\{v_{2},v_{6}\},\{v_{3},v_{5}\},\{v_{3},v_{6}\},\{v_{5},v_{6}\}$.\
There are $10$ U-independent sets of size $2$. The knots $6_1$ and
$6_2$ are distinguished by the number of U-independent sets of
cardinality $2$.
\end{example}
\noindent\textbf{The $I$\textit{-chromatic Number}} \\ The set of
crossings of a reduced knot diagram $D$ can be partitioned into
U-independent sets and the minimum number of such U-independent sets
gives a minimum partition of $E$. The number of U-independent sets
in a minimum partition of $E$ gives the $I$\textit{-chromatic number
}$\chi (E,I)$. The number $\chi (E,I)$ for $D$ can be used as a knot
invariant in combination with $u_{min}(K)$. In other words, two
alternating knots can be distinguished by $\chi (E,I)$ if the knots
have the same $u_{min}(K)$.
\begin{example}
Consider the reduced diagram (Fig. $6(a)$) of knot $7_{2}$ and the
reduced diagram (Fig. $6(b)$) of knot $7_{7}$. For the knots
$7_{2},$ $E= \{v_{1},v_{2},v_{3},v_{4},v_{5},v_{6},v_{7}\}$ (Fig.
$6(a)$) is divided into two
disjoint subsets $A$ and $B$: $A=\{v_{1},v_{2},v_{3},v_{4},v_{5}\}$ and $B$ $%
=\{v_{6},v_{7}\}$.\ The set $A$ contains no unknotting set of
cardinality one and two. Each subset of $A$ with cardinality three is
a minimal unknotting set.\ Every crossing in $B$ unknots the knot,
but $B$ itself is not an unknotting set. Every subset of $E$
containing $\{v_{6}\}$ or $\{v_{7}\}$ is not a minimal unknotting
set.\ Furthermore, every set of cardinality four, five and six
contains an unknotting set.\ Thus, the U-independent sets are:\\
$\{v_{1}\},\{v_{2}\},\{v_{3}\},\{v_{4}\},\{v_{5}\},\{v_{6}\},\{v_{7}\},%
\{v_{1},v_{2}\},\{v_{1},v_{3}\},\{v_{1},v_{4}\},\{v_{1},v_{5}\},\{v_{2},$%
$v_{3}\},\\ \{v_{2},v_{4}\},\{v_{2},v_{5}\},
\{v_{3},v_{4}\},\{v_{3},v_{5}\},%
\{v_{4},v_{5}\},\{v_{1},v_{2},v_{3}\},\{v_{1},v_{2},v_{4}\},%
\{v_{1},v_{2},v_{5} \},$ \\ $\{v_{1},v_{3},v_{4}\},
\{v_{1},v_{3},v_{5}\},\{v_{1},v_{4},v_{5}\},%
\{v_{2},v_{3},v_{4}\},\{v_{2},v_{3},v_{5}\},\{v_{2},v_{4},v_{5}\},%
\{v_{3},v_{4},$ $v_{5}\}.$ A minimum partition is $\{\{v_{1},v_{2},v_{3}\},\{v_{4},v_{5}\},\{v_{6}\},%
\{v_{7}\}\}$ and $\chi (E,I)=4.$
\begin{figure}
\includegraphics [width=8cm]{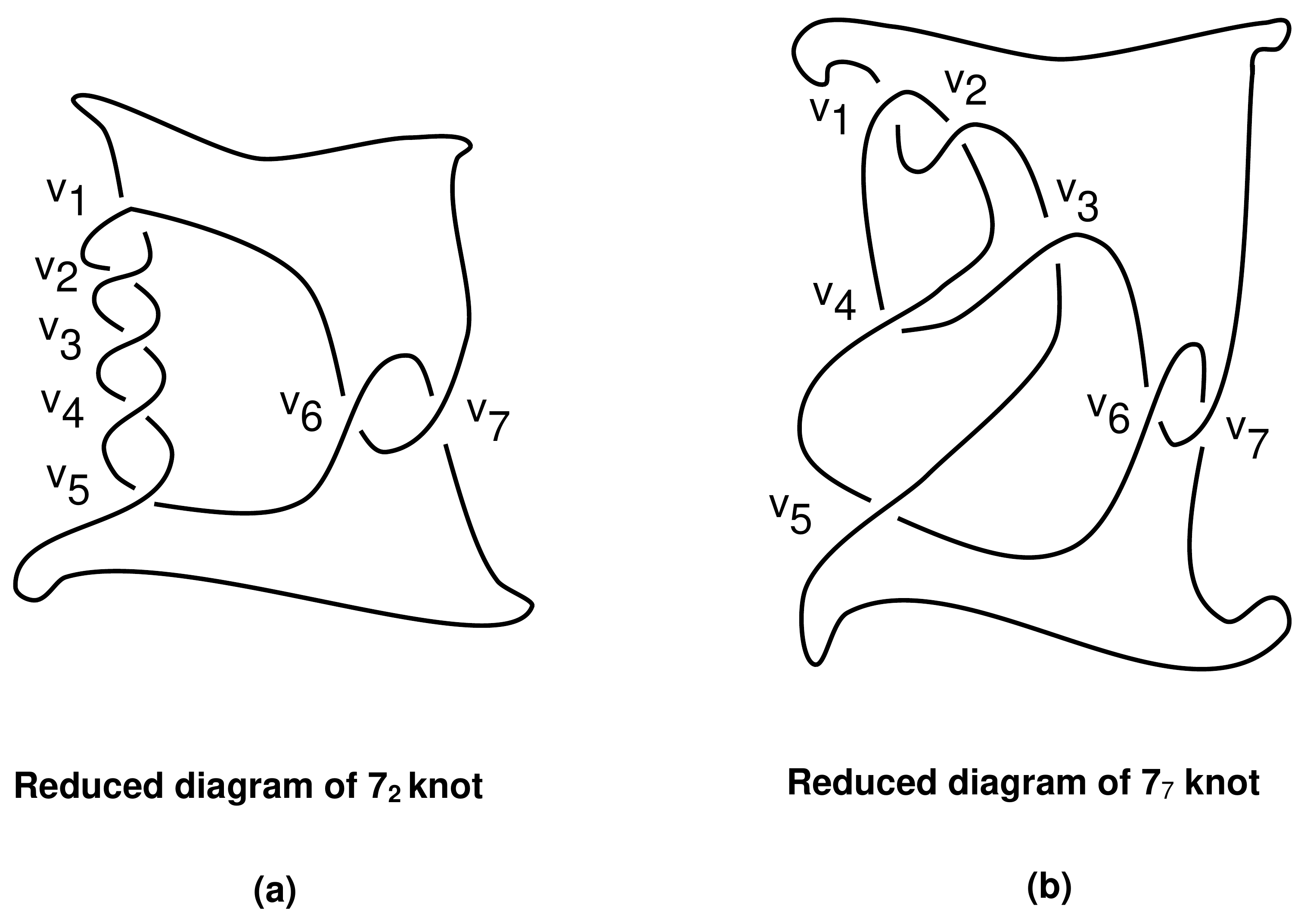}
\caption{Reduced diagram of $7_2$ and $7_{7}$ knot} \label{figure3}
\end{figure}
\FloatBarrier
For the knot $7_{7},$ the set $E=\{v_{1},v_{2},v_{3},v_{4},v_{5},v_{6},v_{7}%
\}$ (Fig. $6(b)$) is divided into three disjoint subsets $A,$ $B,$ and $C$. The set $A$ $%
=\{v_{1},v_{2},v_{3}\}$, $B$ $=\{v_{4},v_{5}\},$ and $C$
$=\{v_{6},v_{7}\}$. In the set $A$, no unknotting set of cardinality
one exists; for cardinality two, all of the sets are unknotting sets except for $\{v_{1},v_{2}\}$.\ In $B$, $%
\{v_{4}\}$ and $\{v_{5}\}$ are unknotting sets but $B$ itself is not
an unknotting set.\ In $C$, neither a set of cardinality one nor $C$
itself is an unknotting set. Every set of cardinality three, four,
five and contains an unknotting set. Thus, the U-independent sets
are: $\{v_{1}\},\{v_{2}
\},\{v_{3}\},\{v_{4}\},\{v_{5}\},\{v_{6}\},\{v_{7}\},$
$\{v_{1},v_{2}\},$ $\{v_{1},v_{3}\},\{v_{1},v_{6}\},\{v_{1},v_{7}\},
\{v_{2},v_{3}\}, \\ \{v_{2},v_{6}\},\{v_{2},v_{7}\},\{v_{3},v_{6}\},
\{v_{3},v_{7}\},\{v_{6},v_{7}\}.$ \\
A minimum partition is
$\{\{v_{1},v_{2}\},\{v_{3}\},\{v_{4}\},\{v_{5}\}, \{v_{6},v_{7}\}\}$
and $\chi (E,I)=5.$ Hence, the knots $7_{2}$ and $7_{7}$ are
distinguished by $\chi (E,I)$.
\end{example}

\section{U-independence as a Matroid}
\subsection{Family $(2n+1, 1, 2n)$}
For a knot $K$ in the family $(2n+1, 1, 2n)$ with $n\geq 2$,
$u(K)=n<u_{min}(K)=n+1$, see \cite{2}. The unknotting number of a
knot in this family can be obtained from the diagram (Fig. $7$).

\textbf{Proof of Proposition \ref{fig15} Part $a)$.} For the knot
diagram $D$ (Fig. $7$) with $u(D)=n$, there are two minimal
unknotting sets $\{w,u_{3}^{\prime },u_{5}^{\prime },u_{7}^{\prime
},\ldots u_{2n-1}^{\prime }\}$ and $\{v_{2},v_{4},v_{6},\ldots,
v_{2n},w\}$ of cardinalities $n$ and $n+1$ respectively.
Consequently, there are two maximal U-independent sets of different
cardinalities. By Remark \ref{rem}, the U-independence system is not
a matroid. \qed
\begin{figure}
\includegraphics [width=5cm]{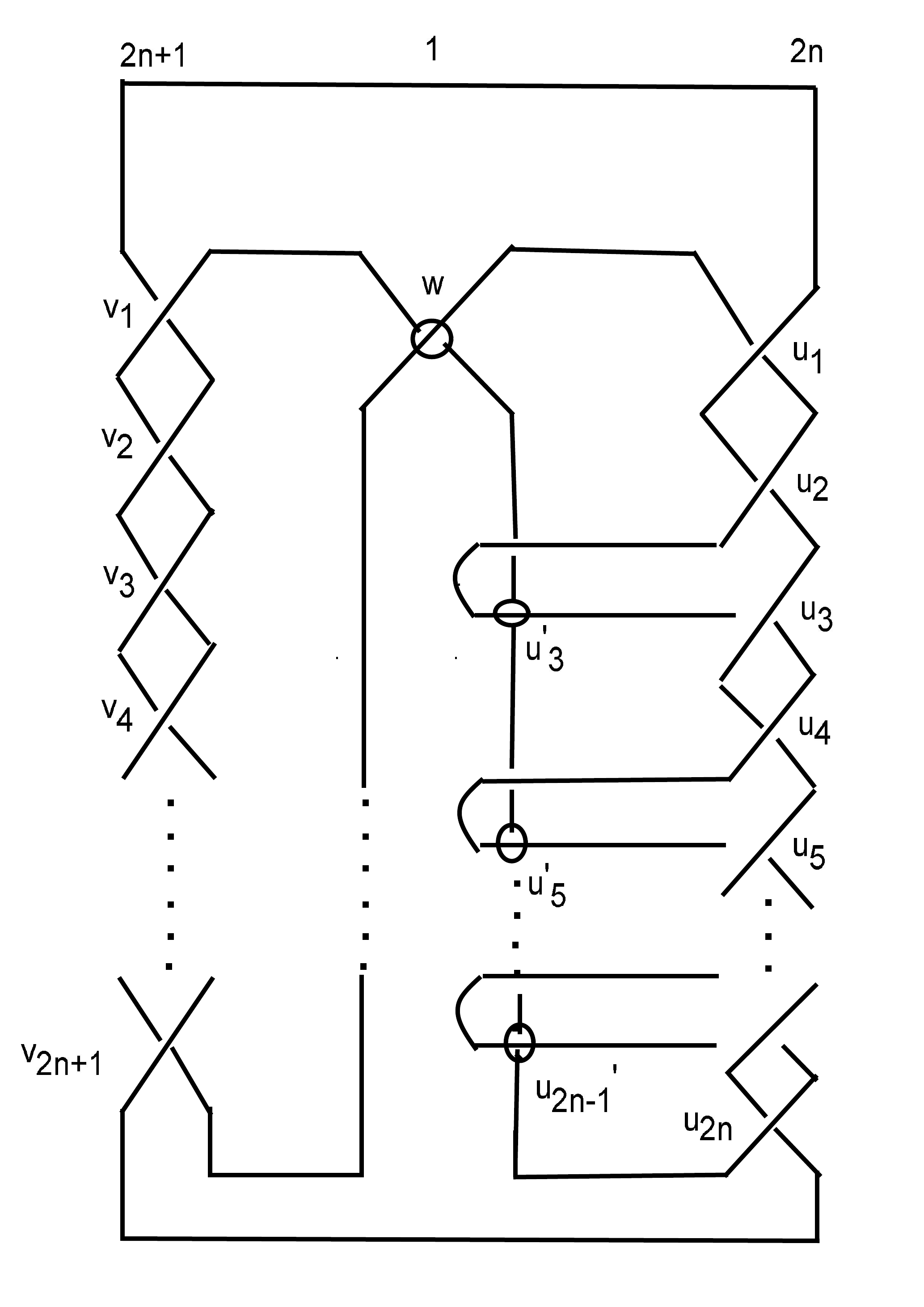}
\caption{$(2n+1,1,2n)$} \label{figure3}
\end{figure}

\subsection{Family $(2n+1)$}

The following result may be known to an expert in knot theory.
Anyhow, it is proved here for the sake of completion.
\begin{lemma}\label{unn}
A knot diagram $D$ in the family $(2n+1)$  has $u(D)=n$.
\end{lemma}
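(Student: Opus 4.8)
The plan is to prove the two inequalities $u(D) \le n$ and $u(D) \ge n$ separately, where $D$ is the standard $2n+1$-crossing diagram of the knot in the family $(2n+1)$ (Fig. $9$); that is, the $(2,2n+1)$ torus knot drawn as a single twist region of $2n+1$ like-signed crossings closed up into a knot. I would first fix this description of $D$ so that the invariant arguments below apply to the particular diagram under consideration.

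For the upper bound $u(D)\le n$, I would argue by an explicit reduction. The key local observation is that switching a single crossing in a twist region produces a crossing whose sign is opposite to that of its neighbours, so that the switched crossing together with an adjacent one bounds a bigon that is removed by a Reidemeister II move, eliminating two crossings at once. Performing this switch-and-reduce step $n$ times removes $2n$ of the $2n+1$ crossings, leaving a closed loop with a single crossing, which is undone by one Reidemeister I move. Hence $n$ switches suffice and $u(D)\le n$.

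For the lower bound $u(D)\ge n$, I would appeal to a knot invariant bounding the unknotting number from below. Since every diagram of a knot $K$ satisfies $u(D)\ge u(K)$, it suffices to show $u(K)\ge n$ for $K=(2n+1)$. The torus knot $(2,2n+1)$ has signature of absolute value $2n$, and the classical inequality $u(K)\ge |\sigma(K)|/2$ then gives $u(K)\ge n$. Combining this with the upper bound yields $u(D)\le n \le u(K)\le u(D)$, so $u(D)=n$.

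The main obstacle is the lower bound: the upper bound is a mechanical Reidemeister-move count, whereas ruling out unknotting with fewer than $n$ switches demands an honest lower-bound invariant. The cleanest route is the signature inequality; alternatively one may simply quote the known value $u((2,2n+1))=(2-1)(2n+1-1)/2=n$ for the unknotting number of a torus knot. The only point requiring care is confirming that the diagram of Fig. $9$ is genuinely the alternating $(2,2n+1)$ diagram, so that the invariant computed for the knot $K$ transfers to this specific $D$.
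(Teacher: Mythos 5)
Your proposal is correct, and it diverges from the paper in the half of the argument that actually matters. The upper bound is essentially the same in both: you switch one crossing in the twist region, cancel it against a neighbour (the paper phrases this as the switch at $v_{2m+3}$ ``killing'' $v_{2m+2}$), and iterate; the paper packages this as an induction on $n$ while you just count $n$ switch-and-reduce steps, which is the same computation. For the lower bound the paper stays entirely diagrammatic: it asserts that switching $m-1$ crossings untangles $2(m-1)$ crossings and leaves a trefoil diagram, so fewer than $m$ switches cannot suffice. That argument only inspects the ``obvious'' unknotting sets and does not rule out an arbitrary choice of $m-1$ crossings producing the unknot, so as written it is more of a plausibility sketch than a proof. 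You instead identify $(2n+1)$ as the $(2,2n+1)$ torus knot and invoke the signature bound $u(K)\geq \lvert\sigma(K)\rvert/2$ with $\lvert\sigma\rvert=2n$, which is a genuine invariant-based obstruction and closes the gap rigorously (and since $u(D)\geq u(K)$, it transfers to the specific diagram). What the paper's route buys is self-containedness --- no external invariant, consistent with its remark that the lemma ``may be known to an expert'' and is included only for completeness; what your route buys is an actually airtight lower bound, at the cost of importing the signature (or the known torus-knot unknotting number). Your one stated caveat --- checking that Fig.~9 really is the standard alternating $(2,2n+1)$ diagram --- is exactly right and is immediate from the Conway notation $(2n+1)$.
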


\begin{proof}
Apply induction on $n$. For $n=1$, $(2n+1)$ is a reduced diagram of
trefoil knot with $u(D)=1$. Suppose $u(D)=m$ for $(2m+1)$.
For $n=m+1,(2(m+1)+1)=(2m+3)$ is a family of knot diagram with
$2m+3$ alternating crossings (Fig. $8$). When the crossing
$v_{2m+3}$ is switched, the crossing $v_{2m+2}$ is also killed and
the knot diagram $(2m+1)$ is obtained (Fig. $8$). By induction,
$u(D)\leq m+1$. The knot diagram $(2m+1)$ can not be unknotted by
fewer than $m$ crossings because if $m-1$ crossings are switched,
then $2(m-1)$ alternating crossings are untangled and a reduced
diagram of trefoil knot is obtained. It follows that the unknotting
number of $(2m+3)$ is $m+1$.
\end{proof}
\begin{figure}
\includegraphics [width=8cm]{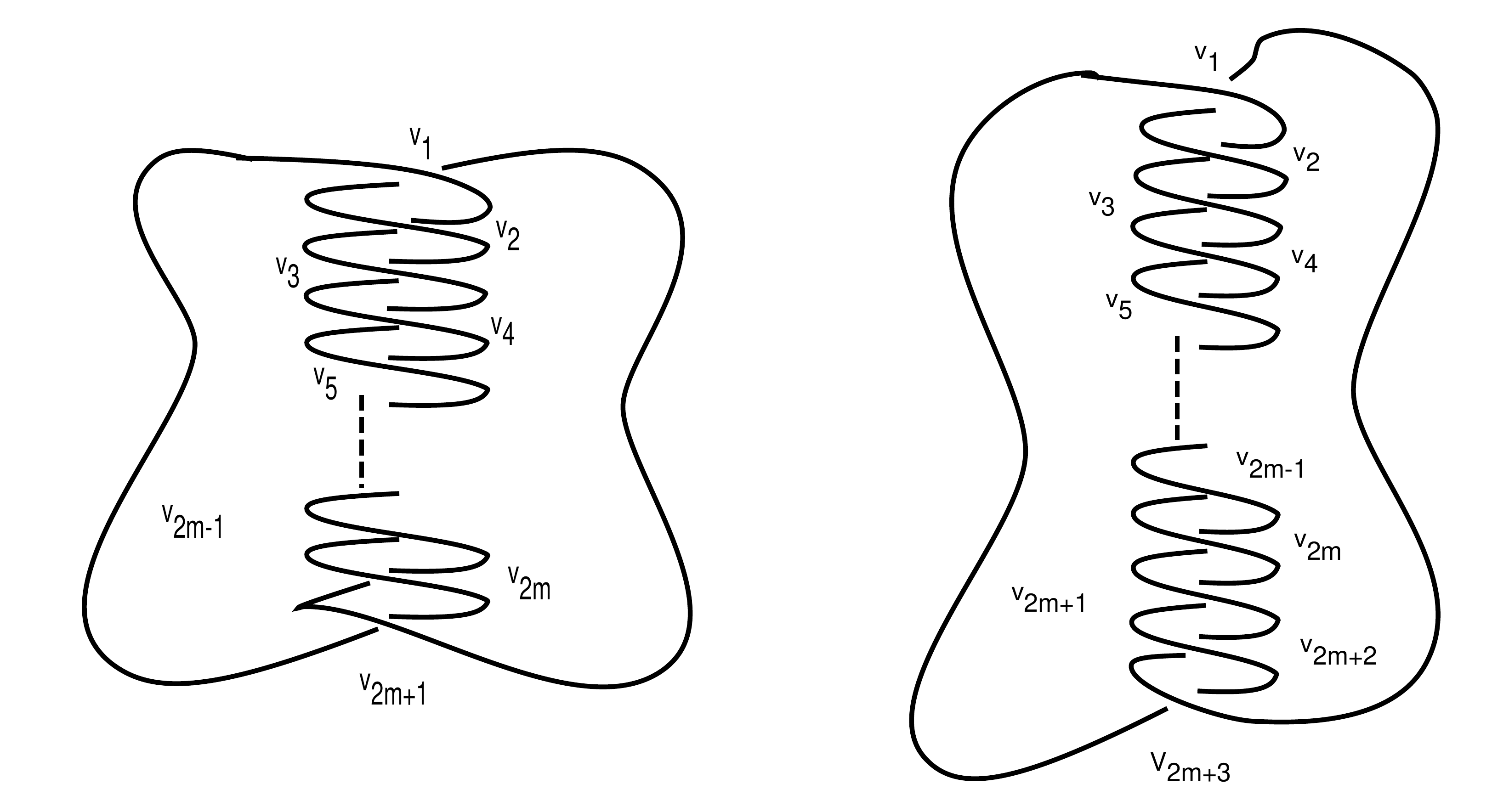}
\caption{$(2m+1)$ and $(2m+3)$} \label{figure3}
\end{figure}

\textbf{Proof of Proposition \ref{fig15} Part (b)} The diagram (Fig.
$8$) has the property that every subset $A\subset E=
\{v_{1},v_{2},v_{3},\ldots,v_{2n},v_{2n+1}\}$ with $\mid A\mid=n$ is
an unknotting set. By Lemma \ref{unn}, the set $A$ must be a minimal
unknotting set (a maximal U-independent set). There is no maximal
U-independent set of cardinality $<n$. Also, there is no
U-independent set $B$ of cardinality $>n$ because $B$ contains an
unknotting set of cardinality $n$. It follows that a U-independent
set is maximal if and only if it is a minimal unknotting set and of
cardinality $n$. The exchange property holds for all maximal
U-independent sets as every subset of $E$ of cardinality $n$ is a
maximal U-independent. Hence, the U-independence system is a matroid
by Definition \ref{matroid}. \qed
\subsection{Family $(2n,2)$}

\begin{figure}
\begin{center}
\includegraphics [width=5cm]{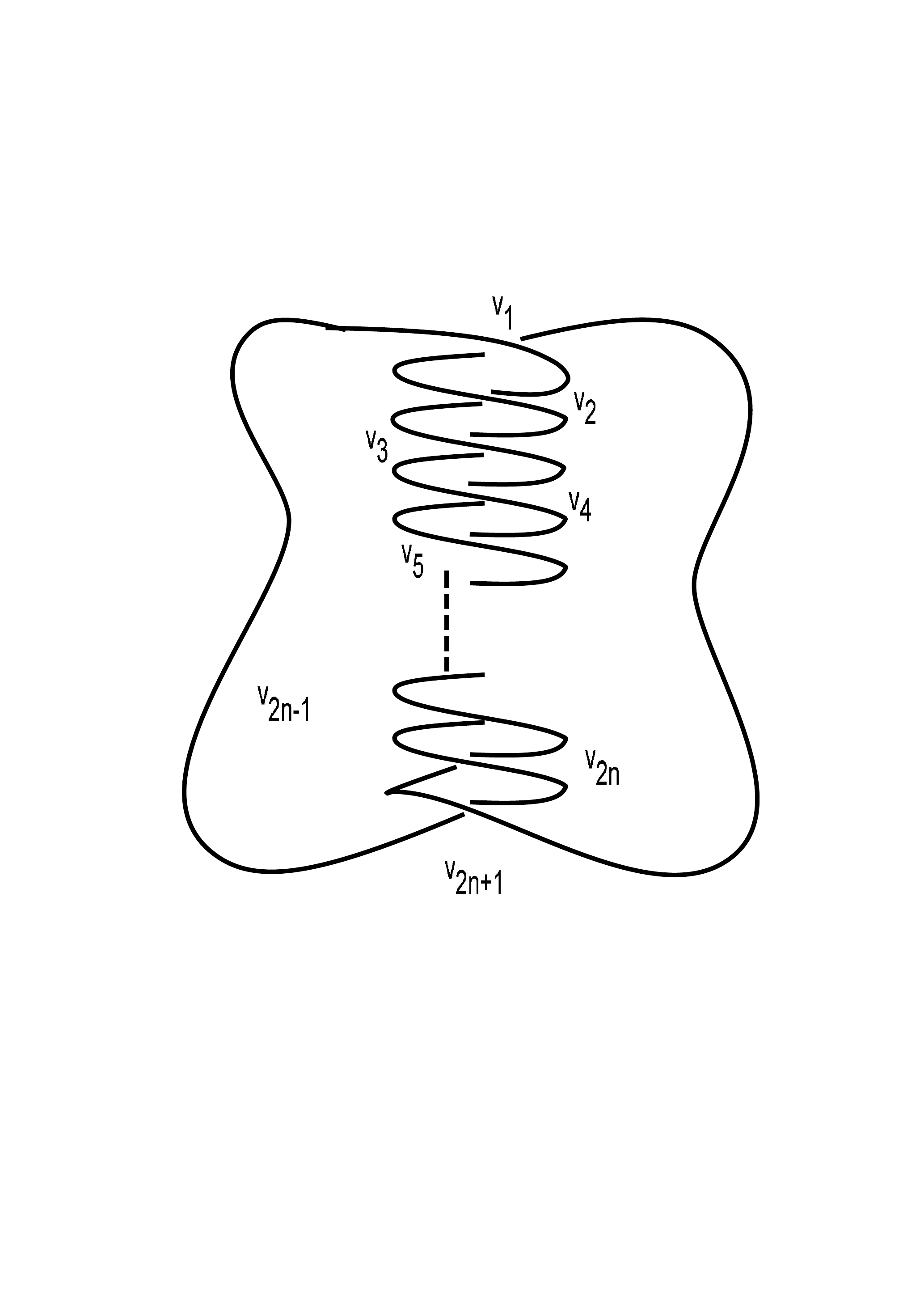}
\caption{$(2n+1)$} \label{figure3}
\end{center}
\end{figure}

For $n\geq 1,$ each diagram $D$ (Fig. $10$) has $u(D)=1$ and its
U-independence system is not a matroid except for the figure eight
knot, i.e., when $n=1$.

\textbf{Proof of Proposition \ref{fig15} Part $c)$.} For the diagram
(Fig. $10$), the sets $\{w\}$ and $ \{v_{1},v_{2},v_{3},v_{4},\ldots
,v_{n}\}$ are minimal unknotting sets of cardinality $1$ and $n$
respectively. Thus, there are two maximal U-independent sets having
different cardinalities. By Remark \ref{rem}, the U-independence
system is not a matroid. \qed
\begin{figure}
\begin{center}
\includegraphics [width=4cm]{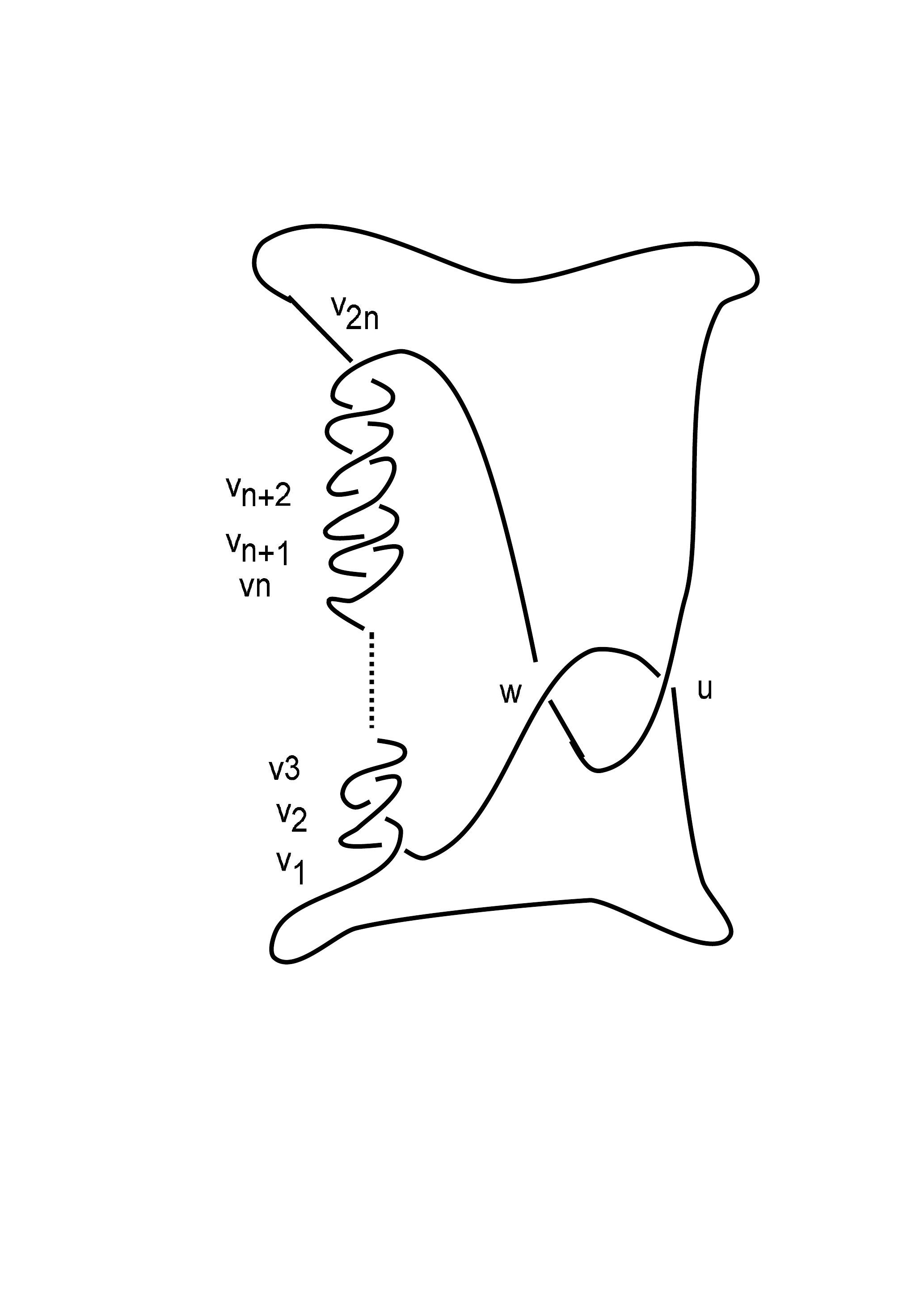}
\caption{$(2n,2)$} \label{figure3}
\end{center}
\end{figure}
\FloatBarrier
\section{Conclusion}
This completes our introduction to U-independence system of a
classical knot. On the same lines, independence systems can also be
defined for a knot diagram with respect to other invariants like
bridge numbers and algebraic unknotting numbers.\ The corresponding
invariants of these independence systems may also be used as knot
invariants in combination with these invariants.\ Every independence
system $(E,I)$ is an abstract simplicial complex, see \cite{5}.\
Therefore, the homology of $(E,I)$ can be investigated for finer
invariants of the corresponding knots.\ Similarly independence
systems can be associated and studied for virtual knots, see
\cite{6}.

There is much that implores for further investigation. For example,
one can show that the U-independence systems for reduced alternating
diagrams of $6_1$ and $6_3$ are isomorphic. The knots $6_1$ and
$6_3$ are not mirror images of each other. We can then ask the
following
open question:\\
 \textbf{Question.} Does there exist two non-isotopic
alternating knots (not the mirror image of each other) that have the
same number of crossings in their reduced alternating diagram and
the same $u_{min}(K)\geq 2$ with isomorphic U-independent systems?
\section{Acknowledgement}
The authors would like to thank Professor Colin Adams and his
student Jonathan Deng for their help in improving the exposition and
language of this paper. Specially Professor Colin, who was very kind
to clarify some notions in knot theory.
\bibliographystyle{amsplain}

\begin{thebibliography}{10}
\bibitem{1} C.\ C.\ Adams, \textit{The Knot Book: An Elementary Introduction to the
Mathematical Theory of Knots}, W.H.\ Freeman and Company $1994$.

\bibitem{2} J.\ A.\ Bernhard, Unknotting numbers and minimal knot diagrams,
\textit{J. knot Theory Ramifications} \textbf{3}$(1992)$ $1$-$5$.

\bibitem{3} D.\ L.\ Boutin, Determining sets, resolving sets and the
exchange property, \textit{Graphs Combin.}\ \textbf{25}$(2009)$
$789$-$806$.

\bibitem{4} J. H. Conway, On enumeration of knots and links,
\textit{Proc. Conf. Oxford} $(1967)$ $329$-$258$.

\bibitem{5} B. Korte, L. Lov'{a}sz, R. Schrader, \textit{Greedoids},
Springer-Verlag, Berlin, $1991$.

\bibitem{K6} L. H. Kauffman, State models and the Jones polynomial. \textit{Topology}
$\mathbf{26}(1987)$ $395$–$407$.

\bibitem{6} L.\ H.\ Kauffman, Virtual knot theory, \textit{European J. Combin.} $\mathbf{20}(1999)$ $663$-$690$.

\bibitem{TC} W.\ W.\ Menasco, M. B. Thistlethwaite, The Tait flyping conjecture, \textit{Bull. Amer. Math. Soc. (N.S.)}\ $
\mathbf{20}2(1991)$ $ 403$-$412$.

\bibitem{6a} O. Melnikov, V. Sarvanov, R.I. Tyshkevich, V. Yemelichev, I.E. Zverovich,
\textit{Excercises in Graph Theory}, Springer $1998$.

\bibitem{8} Y.\ Nakanishi, Unknotting numbers and knot diagrams with the the
minimum crossings, \textit{Mathematics Seminar Notes, kobe
University}, \textbf{11}$(1983)$ $257$-$258$.

\bibitem{9} J. G. Oxley, \textit{Matroid Theory}, Oxford University Press, Oxford, $%
1992$.

\bibitem{10} D. Rolfsen, \textit{Knots and links, Berkeley}, Calif, Publish or
Perish, $1976$.

\bibitem{11} A.\ Stoimenov, On the unknotting number of minimal diagrams,
\textit{Math. Comp.} \textbf{72}$(2003)$ $2043-2057$.

\bibitem{12} D. B. West, \textit{Introduction to Graph Theory}, Prentice Hall, $%
2001 $.

\bibitem{13} H.\ Whitney, On the abstract properties of linear dependence, \textit{Amer. J. Math.} \textbf{57}$(1935)$ $509-533$.

\bibitem{14} S.\ Zhou, Minimum partition of an independence system into
independent sets, \textit{Discrete Optim.} \textbf{6}$(2009)$
$125-133$.


\end{thebibliography}

\end{document}